\tikzset{
  jumpdot/.style={mark=*,solid},
  excl/.append style={jumpdot,fill=white},
  incl/.append style={jumpdot,fill=black},
}
\newcommand{\N}{\mathbb{N}}
\newcommand{\C}{\mathbb{C}}
\newcommand{\Ulambda}{U_\lambda}
\newcommand{\Arg}{\operatorname{Arg}}
\newcommand{\lcu}{\left\lbrace}
\newcommand{\rcu}{\right\rbrace}
\newcommand{\lpa}{\left(}
\newcommand{\rpa}{\right)}
\newtheorem{theorem}{Theorem}
\newtheorem{proposition}{Proposition}
\newenvironment{manualproposition}[1]{%
  \manualpropositioninner
}{\endmanualpropositioninner}
\newenvironment{manuallemma}[1]{%
  \manuallemmainner
}{\endmanuallemmainner}
\theoremstyle{definition}
\newtheorem{definition}{Definition}
\pgfplotsset{compat=1.18}
\begin{document}
\title{Resolving an error with path-tracing and a 2-to-1 mapping in a work of Jang, So, and Marotta}
\author{Murali Meyer, Daniel Stoertz, Mike Wang}
\date{\today}
\maketitle

\begin{abstract}
We identify two nontrivial errors in the proof of the main results of the work \emph{generalized baby Mandelbrot sets adorned with halos in families of rational maps} \cite{JSM} by Jang, So, and Marotta. We correct these errors, showing that the main results remain true.
\end{abstract}

\section{Introduction}\label{sec:intro}
The work \cite{JSM} of Jang, So, and Marotta aims to show, among other things, that for the family of rational maps on the Riemann sphere given by $F_{\lambda}(z)=z^n+\frac{\lambda}{z^d}$ where $n,d \in \N$ with $\frac{1}{n}+\frac{1}{d} < 1$, the variable $z \in \C$, and the parameter $\lambda \in \C$, there exist $n-1$ baby Mandelbrot sets in the connectedness locus for the filled Julia set of $F_{\lambda}(z)$. This is done by application of the Douady-Hubbard theory of polynomial-like maps established in \cite{DH}. In this note, we show that the proofs of two crucial results from \cite{JSM} contain nontrivial errors. Here we further give corrections to the constructions underlying the incorrect arguments, thereby proving that the main results of \cite{JSM} are true.

The errors involve two necessary conditions for the application of the Douady-Hubbard theory. One of these conditions that the family of maps $F_{\lambda}$ is polynomial-like of degree 2 on a region $U_{\lambda}'$ onto another region $U_\lambda$, with $\lambda$ in a specified region called $W$ in the parameter plane. Another condition of \cite{DH} is that, as $\lambda$ winds once around $\partial W$, the critical value of $F_\lambda$ must wind once around the exterior of $\Ulambda'$ while remaining in $\Ulambda$. Notably, both $\Ulambda$ and $\Ulambda'$ must be open sets in order to apply the result of \cite{DH}.

A brief summary of the first error is as follows. Proposition 5.1 in \cite{JSM} lays out the proof of $F_{\lambda}$ being polynomial-like of degree 2. One of the criteria is that $F_\lambda$ must map $\partial \Ulambda'$ 2-to-1 onto $\partial \Ulambda$. The constructed region $U_\lambda'$ is roughly a polar rectangle, where its boundary naturally splits into four segments, these being two ray segments and two non-circular arcs. The $\Ulambda$ region informally has a Pac-Man shape, its boundary consisting of a radial arc and two ray segments emanating from the origin (see Figure \ref{fig:original} in Section \ref{sec:errors}). The authors claim to show that both ray segments of $\partial \Ulambda'$ map 2-to-1 onto the linear segments of $\partial \Ulambda$. We show that such a mapping is impossible and that the actual image of the ray segments of $\partial \Ulambda'$ is instead a set of two line segments that pass through and intersect at the origin. Here each ray segment of $\partial U_\lambda '$ maps 1-to-1 onto distinct line segments through the origin, and so the mapping of $U_\lambda '$ onto $U_\lambda$ by $F_\lambda$ cannot be 2-to-1 on all of $U_\lambda '$. We then propose a solution to this problem that leaves the main result of Proposition 5.1 intact. This is accomplished by restricting the argument bounds of $\Ulambda$.

The second error occurs in Lemma 5.2. The proof traces the path of the critical value $v_\lambda$ as $\lambda$ winds once around $\partial W$. For one of the segments of $\partial W$, the authors attempt to show that the winding condition is upheld by demonstrating that the critical value traces part of the boundary of $\Ulambda$. However, because $\Ulambda$ is an open set, this violates the condition that the critical value lie in $\Ulambda\setminus \Ulambda'$. We resolve this by redefining $\Ulambda$ as having an outer radius equal to the old radius of $\Ulambda$ plus some sufficiently small $\epsilon$.

It is worth noting that these errors specifically arise when $n\neq d$, as is the general case studied in \cite{JSM}. In the special case that $n=d$, the aforementioned results were proved by Devaney in \cite{D}. Additionally worth noting are more recent works by Boyd and Mitchell (\cite{BM}) and Boyd and Hoeppner (\cite{BH}), which study a related family of rational maps defined by $R_{n,a,c}(z) = z^n +\frac{a}{z^n} +c$ and also produce results similar to \cite{JSM}. Comparing parameters, we may say that $n=d$ in both \cite{BM} and \cite{BH}, and therefore these works are not subject to the same concerns as \cite{JSM}. Given all of this, we will assume throughout that $n\neq d$. 

This note is organized as follows. In Section \ref{sec:prelim}, we cite all results from \cite{DH} and \cite{JSM} required to understand the errors. In Section \ref{sec:errors}, we describe the errors in the proofs of Proposition 5.1 and Lemma 5.2. Finally in Section \ref{sec:resolution}, we define a new region $\widehat{U}_\lambda$ with an extended outer radius and restricted angle bounds compared to $U_\lambda$, and we define the region $\widehat{U}_\lambda'$ as its preimage. We then prove that Propostition 5.1 and Lemma 5.2 are true under these conditions.

\section{Preliminaries}\label{sec:prelim}

For convenience, we collect here all results from \cite{JSM} regarding the family $F_\lambda$ that are correct and needed to understand Proposition 5.1 and Lemma 5.2. We also recall some results from \cite{DH}.

Let $n,d\in \N$ satisfy $\frac{1}{n} + \frac{1}{d}<1$, and let $m=n+d$ denote the degree of $F_\lambda$, and let $\nu = e^{i\frac{2\pi}{m}}$ be a primitive $m$th root of unity. The family $F_\lambda$ has $2(m) -2$ critical points counted with multiplicity. Here $\infty$ has multiplicity $n-1$ and $0$ has multiplicity $d-1$. The remaining $m$ critical points depend on $\lambda$. Writing $\lambda = |\lambda|e^{i\psi}$ where $\psi = \Arg(\lambda)$, then the `free' critical points $c_j$ of $F_\lambda$ are given by $c_j = c_\lambda \nu^j$, where
	\[c_\lambda = \lpa \frac{d}{n}|\lambda| \rpa^{\frac{1}{m}} e^{i\frac{\psi}{m}} \]
and $j=0,1,\ldots,m-1$. All these critical points lie on a circle of radius $|c_\lambda|$. The corresponding critical values are of the form $v_j = v_\lambda \nu^{gj}$ where $g=m/\gcd(n,d)$, $j = 0,1,\ldots, \gcd(n,d)$, and
\[ v_\lambda = \frac{m}{d}\lpa \frac{d}{n}|\lambda| \rpa^{\frac{n}{m}}e^{i\frac{n\psi}{m}}.\]
Note that there are $m/\gcd(n,d)$ distinct critical values, each being mapped to by $\gcd(n,d)$ distinct critical points under $F_\lambda$.

The family $F_\lambda$ also has $m$ prepoles, i.e. preimages of $0$, of the form $p_j = p_\lambda \nu^j$ where $p_\lambda = (-\lambda)^{1/m}$ and $j=0,1,\ldots,m-1$. These lie on the same circle as the free critical points.

It is easy to check that $F_\lambda (\nu z) = \nu^n F_\lambda(z)$, and so all orbits behave symetrically under $F_\lambda$. For a more precise statement of this symmetry, see \cite[Theorem 2.1]{JSM}. The point at $\infty$ is a superattracting fixed point for any value of $\lambda$. Let $B_\lambda$ denote the immediate attracting basin of $\infty$, and let $T_\lambda$ be the component of $F_\lambda^{-1}(B_\lambda)$ that contains the origin. We call $T_\lambda$ the \emph{trap door}.

The \emph{connectedness locus} $\mathcal{C}$ of $F_\lambda$ is the set of $\lambda$-values for which the Julia set of $F_\lambda$ is connected. A primary concern of \cite{JSM} is to establish the existence of $n-1$ small copies of the Mandelbrot set, often referred to as \emph{baby Mandelbrot sets}, in $\mathcal{C}$. To locate these baby Mandelbrot sets, the authors rely on the Douady-Hubbard theory of polynomial-like maps established in \cite{DH}.

\begin{definition}\cite{DH}\label{def:polynomial-like} A map $G : U' \to G(U') = U$ is \emph{polynomial-like} if
\begin{itemize}
    \item $G' $ and $ G$ are bounded, open, simply connected subsets of $\C$,
    \item $U'$ is relatively compact in $U$, and
    \item $G$ is analytic and proper.
\end{itemize}
Further $G$ is polynomial-like of \emph{degree two} if $G$ is a 2-to-1 mapping except at finitely many points, and $U'$ contains a unique critical point of $G$.
\end{definition}

Douady and Hubbard give sufficient conditions for the appearance of baby Mandelbrot sets in the connectedness locus of a family of polynomial-like maps of degree two. The following theorem does not appear in a single statement in \cite{DH}, but is summarized well in a recent work of Boyd and Mitchell \cite{BM}.

\begin{theorem}[\cite{DH}, see \cite{BM}, Theorem 2.3]\label{thm:mandelbrot}
Assume we are given a family of polynomial-like maps $G_\lambda:\Ulambda'\to \Ulambda$ that satisfies the following.
	\begin{itemize}
	\item $\lambda$ is in an open set in $\C$ which contains a closed disk $W$.
	\item The boundaries of $\Ulambda'$ and $\Ulambda$ vary analytically as $\lambda$ varies.
	\item The map $(\lambda,z) \mapsto G_\lambda(z)$ depends analytically on both $\lambda$ and $z$.
	\item Each $G_\lambda$ is polynomial-like of degree two with a unique critical point $C_\lambda$ in $\Ulambda'$.
	\end{itemize}
Suppose for all $\lambda \in \partial W$ that the critical value $G_\lambda(C_\lambda) \in \Ulambda - \Ulambda'$ and that $G_\lambda(C_\lambda)$ makes a closed loop around the outside of $\Ulambda'$ as $\lambda$ winds once around $\partial W$.

If all this occurs, then the set of $\lambda$-values for which the orbit of $C_\lambda$ does not escape from $\Ulambda'$ is homeomorphic to the Mandelbrot set.
\end{theorem}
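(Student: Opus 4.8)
The plan is to recognize Theorem~\ref{thm:mandelbrot} as the Douady--Hubbard theorem on Mandelbrot-like families and to prove it by the \emph{straightening} method. Concretely, I would attach to each polynomial-like map $G_\lambda:\Ulambda'\to\Ulambda$ a uniquely determined quadratic polynomial $P_{c}(z)=z^2+c$, thereby defining a parameter map $\chi:W\to\C$, $\chi(\lambda)=c$; I would then show that the non-escape locus is exactly $\chi^{-1}(M)$, where $M$ denotes the Mandelbrot set, and that $\chi$ restricts to a homeomorphism of $\chi^{-1}(M)$ onto $M$.

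To construct $\chi$ I would straighten a single map $G_\lambda$. The idea is quasiconformal surgery: interpolate on the annulus $\Ulambda\setminus\overline{\Ulambda'}$ so as to glue the near-$\infty$ dynamics of $z\mapsto z^2$ onto the exterior of $\Ulambda$, obtaining a quasiregular degree-two branched self-map $\widehat{G}_\lambda$ of the sphere that agrees with $G_\lambda$ on $\Ulambda'$. Pulling back the standard complex structure under the iterates of $\widehat{G}_\lambda$ produces a $\widehat{G}_\lambda$-invariant Beltrami field of bounded dilatation; the Measurable Riemann Mapping Theorem integrates it to a quasiconformal homeomorphism $\phi_\lambda$, and $\phi_\lambda\circ\widehat{G}_\lambda\circ\phi_\lambda^{-1}$ is then holomorphic, degree two, and fixes $\infty$, hence is affinely conjugate to a unique $P_c$. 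Setting $\chi(\lambda)=c$, the map $\phi_\lambda$ is a hybrid equivalence between $G_\lambda$ and $P_c$, meaning its dilatation vanishes on the filled Julia set $K(G_\lambda)=\{z:G_\lambda^{\,k}(z)\in\Ulambda' \text{ for all }k\}$.

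Next I would match up the dynamical pictures. The set $K(G_\lambda)$ is connected exactly when the critical orbit stays in $\Ulambda'$, and the hybrid equivalence carries this connectedness to the statement $c=\chi(\lambda)\in M$; hence the non-escape locus of $C_\lambda$ equals $\chi^{-1}(M)$. To see that $\chi$ is a homeomorphism onto $M$, I would use that the analytic dependence of $(\lambda,z)\mapsto G_\lambda(z)$ and the analytic variation of the boundaries make $\chi$ continuous on $W$ and holomorphic on its interior, while for $\lambda\in\partial W$ the hypothesis places the critical value in $\Ulambda\setminus\Ulambda'$, so that $\chi(\partial W)\subset\C\setminus M$. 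The remaining hypothesis---that $G_\lambda(C_\lambda)$ makes one closed loop around the outside of $\Ulambda'$ as $\lambda$ winds once around $\partial W$---translates, through the B\"ottcher coordinate identifying the escaping critical value with the external position of $c$ relative to $M$, into the statement that $\chi|_{\partial W}$ has winding number $1$ about each point of $M$. By the argument principle $\chi$ is then proper of degree one onto a neighborhood of $M$, hence a biholomorphism there; restricting to $M$ yields the desired homeomorphism $\chi^{-1}(M)\to M$.

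The hard part is the straightening theorem and the regularity of $\chi$, not the degree bookkeeping. Well-definedness of $\chi$ rests on the rigidity statement that hybrid-equivalent quadratics are affinely conjugate and thus share the same $c$; continuity and injectivity of $\chi$ require uniform control of the quasiconformal dilatations and a holomorphic-motions argument for the external class, which is the technical core of \cite{DH}. For the purposes of this note I would take that machinery as given (following \cite{DH} and the exposition in \cite{BM}) and reserve the real work for checking, for the family $F_\lambda$, the two hypotheses that feed Theorem~\ref{thm:mandelbrot}: that $F_\lambda$ is genuinely polynomial-like of degree two on $\Ulambda'\to\Ulambda$, and that the critical value winds exactly once; these are precisely the points repaired in the corrected Proposition~5.1 and Lemma~5.2.
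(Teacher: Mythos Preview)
The paper does not prove Theorem~\ref{thm:mandelbrot}; it is quoted from \cite{DH} (in the formulation of \cite{BM}) and used as a black box, so there is no proof in the paper to compare against. Your sketch is the standard Douady--Hubbard argument via straightening, and as a high-level plan it is correct.

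One point deserves correction. You assert that $\chi$ is ``holomorphic on its interior'' and then invoke the argument principle to conclude that $\chi$ is a biholomorphism near $M$. In the Douady--Hubbard setting the straightening map $\chi:W\to\C$ is in general only continuous, not holomorphic: the hybrid class is canonical only when $K(G_\lambda)$ is connected, and on the escape locus the value $\chi(\lambda)$ depends on the non-canonical choice of tubing (external class). The degree-one statement and the homeomorphism $\chi^{-1}(M)\to M$ are therefore obtained by a \emph{topological} degree argument, not by the holomorphic argument principle, and one does not get a biholomorphism on a neighborhood of $M$. You do flag the regularity and injectivity of $\chi$ as ``the technical core of \cite{DH}'' and defer to that reference, which is appropriate for a note of this kind; just drop the holomorphy claim and phrase the winding-number step in topological terms.
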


\section{The errors in \cite{JSM}}\label{sec:errors}

Towards the application of Theorem \ref{thm:mandelbrot}, the authors of \cite{JSM} construct the following sets. See Section 4 of \cite{JSM} for the reasoning behind all the specific values.

The region $W$ is defined as the polar rectangle of $\lambda$-values in the right half-plane enclosed by the arcs of the circles given by 
    \[|\lambda|=\frac{n}{d}\left(\frac{d}{2m}\right)^{\frac{md}{nd-m}} \text{and } |\lambda|=\frac{n}{d}\lpa\frac{2d}{m}\rpa^{\frac{m}{n}}\]
and by portions of the rays
    \[\Arg(\lambda)=\pm\frac{\pi}{n-1}.\]
    
To define $\Ulambda'$, first $\mu$ and $\Gamma$ are defined as the circles 
    \[\mu : |z|=\frac{1}{2}\lpa\frac{d|\lambda|}{n}\rpa^{\frac{1}{d}}\quad \text{and }\quad
    \Gamma : |z|=2.\] 
Then let $\gamma_d$ be the preimage of $\Gamma$ that lies outside $\mu$ in the trapdoor and $\gamma_n$ be the preimage of $\Gamma$ that lies inside $\Gamma$ in the attracting basin of infinity. The sector $\Ulambda'$ is then defined as the points in the open region bounded by arcs of the two simple closed curves $\gamma_d \text{ and } \gamma_n$ and portions of the rays
    \[\text{Arg }z=\frac{\psi\pm\pi}{m}.\]
Thus $\Ulambda'$ is roughly a polar rectangle, with non-circular arcs as radial boundaries.

Finally, $\Ulambda$ is defined as the image of $\Ulambda'$ under $F_\lambda$. See Figure \ref{fig:original}. 
	\begin{figure}[t]
    	\centering
    	\includegraphics[width=3in]{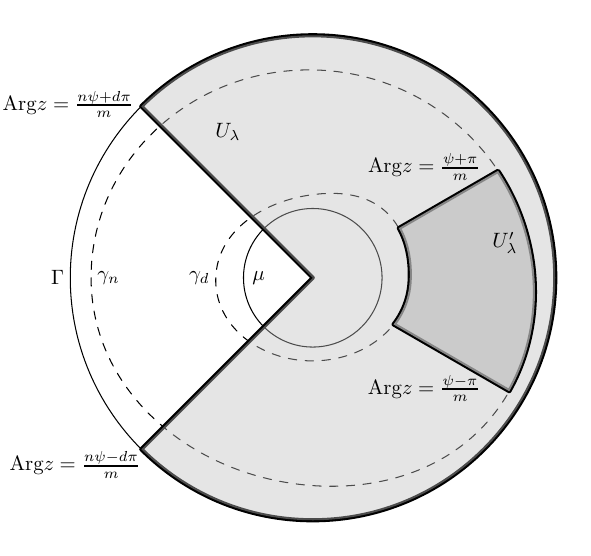}
    	\caption{Corrected Figure from \cite{JSM}. The sets $\Ulambda' \text{and } \Ulambda$, where $\Ulambda' \subset \Ulambda$. Here $\Ulambda$ is the lightly shaded Pac-Man shape, and $\Ulambda'$ is the darker shaded region similar to a polar rectangle. The circle $\Gamma$ lies in the attracting basin of infinity and the circle $\mu$ lies in the trapdoor.}
    	\label{fig:original}
	\end{figure}
As a quick note, we believe that the illustration corresponding to Figure \ref{fig:original}, which is \cite[Figure 5]{JSM}, contains a minor error and should have had $\gamma_d$ lying in the annulus bounded by $\Gamma$ and $\mu$. This is confirmed in the proof of \cite[Proposition 4.2]{JSM} where, if $\lambda\in W \text{ and } |z|\leq\frac{1}{2}\lpa\frac{d}{n}|\lambda|\rpa^{\frac{1}{d}}=\mu \text{ then } |F_\lambda(z)|\geq 2=\Gamma$. In other words, everything inside of $\mu$ is mapped outside of $\Gamma$ therefore the preimage of $\Gamma$ that lies in the trapdoor must be outside of $\mu$.

The first result from \cite{JSM} whose proof is incorrect is the following.

\begin{manualproposition}{5.1}\label{prop:5.1}
The family of maps $F_\lambda$ defined on $\Ulambda'$ with $\lambda\in W$ is a polynomial-like family of degree two.
\end{manualproposition}

In the proof of this proposition, the authors claim that the ray-segments of $\partial U_{\lambda}'$ each get mapped in a 2-to-1 fashion onto the lines passing through the origin with argument $\frac{n\psi \pm d\pi}{m}$, which form the piecewise linear segment of $\partial \Ulambda$. By construction, the ray-segments of $\partial\Ulambda'$ each contain a prepole of $F_{\lambda}$ and therefore must be mapped to line segments passing through the origin. However, for this mapping to be 2-to-1, each straight line segment of $\partial U_\lambda '$ would need to map onto the union of the piecewise linear segments of $\partial\Ulambda$ shown in Figure \ref{fig:original}. 

Yet $F_{\lambda}$ is a conformal mapping away from its critical points and poles, and one of the properties of conformal mappings is that they locally preserve angles between intersecting smooth curves. Recall that we are assuming that $n\neq d$, in which case the piecewise linear segments of $\partial\Ulambda$ are subsets of the rays
	\[\Arg z = \frac{n\psi \pm d\pi}{m}. \]
When $n\neq d$, these rays are not parallel at the origin, and hence the ray-segments of $\partial\Ulambda'$ would have to pass through the prepoles at an angle of $(2d\pi)/m \neq \pi$. Since this is not the case, we conclude that the mapping as stated cannot be 2-to-1.

Instead, the actual image of the union of the ray-segments of $\partial \Ulambda'$ is a union of two line segments crossing at the origin and intersecting nowhere else. Therefore, $F_\lambda$ maps parts of $U_{\lambda}'$ only 1-to-1 onto parts of $\Ulambda$, and the mapping is only 2-to-1 on a subset of $\Ulambda'$. See Figure \ref{fig:pacman2.0} for an illustration of this behavior. As a result, the maps $F_{\lambda}$ are not polynomial-like of degree two on $\Ulambda'$.

\begin{figure}[h]
        \centering
        \includegraphics[width=1\textwidth]{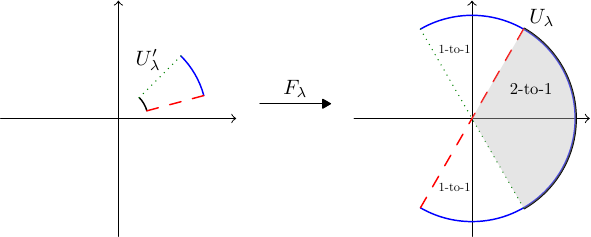}
        \caption{A color-coded picture of manner in which $\Ulambda'$ is mapped onto $\Ulambda$ by $F_\lambda$. Segments of $\partial U_\lambda'$ with a given color and texture are mapped to the corresponding segments of $\partial U_\lambda$. The overall mapping is only 2-to-1 onto the shaded subset of $U_\lambda$.}
        \label{fig:pacman2.0}
\end{figure}

The second result with an incorrect proof is the following. Note that this error is less severe than the first, and is accordingly easier to correct.

\begin{manuallemma}{5.2}[\cite{JSM}] \label{lem:5.2}
	There exists a small copy of the Mandelbrot set in the parameter $\lambda$-plane for $F_\lambda$ in each of the $n-1$ sectors of the form
		\[\frac{(2j-1)\pi}{n-1} < \Arg \lambda < \frac{(2j+1)\pi}{n}, \quad j = 0,1,2,\ldots, n-2. \]
\end{manuallemma}

In the proof of this lemma, the four segments of $\partial W$ are examined individually and the location of the critical value $v_\lambda$ is tracked. This is to fulfill the winding condition in the hypotheses of Theorem \ref{thm:mandelbrot}. The error occurs when $\lambda$ lies on the outer circular boundary of $W$, where
	\[|\lambda| = \frac{n}{d}\lpa \frac{2d}{m}\rpa^\frac{m}{n}. \]
In general, a straightforward computation shows that the modulus of $v_\lambda$ is
	\[|v_\lambda| = \frac{m}{d}\lpa\frac{d}{n}|\lambda|\rpa^{\frac{n}{m}}. \]
With $\lambda$ as above, this simplifies to $|v_\lambda| = 2$. So in this case, $v_\lambda$ lies on $\Gamma$, which is part of the boundary of $\Ulambda$. This violates the condition of Theorem \ref{thm:mandelbrot} that $v_\lambda$ must lie in $\Ulambda - \Ulambda'$, since $\Ulambda$ does not contain its boundary by construction. This cannot be fixed by simply taking closures, since Definition \ref{def:polynomial-like} requires $\Ulambda$ to be an open set. Hence Theorem \ref{thm:mandelbrot} cannot be applied.

Ultimately, the problems in both proofs are due to an incorrect setup when defining $\Ulambda'$ and $\Ulambda$.

\section{A resolution}\label{sec:resolution}

To fix the problems and retain the results from \cite{JSM}, we modify the authors' construction. Recall that we are assuming that $n\neq d$, both since the problems do not arise when $n=d$ and since Devaney already covered the $n=d$ case in \cite{D}.

Recall from Section \ref{sec:errors} the definitions of $\Gamma$ as the circle centered at origin with radius 2, $\mu$ as the circle centered at the origin with radius $\frac12(\frac{d}{n}|\lambda|)^{\frac{1}{d}}$, and $\gamma_d,\gamma_n$ as the preimages of $\Gamma$ under $F_\lambda$ such that $\gamma_d$ is in the trapdoor $T_\lambda$ and $\gamma_n$ is in the escaping basin $B_{\lambda}$.
  
For compact sets $\alpha$ and $\beta$, let $d(\alpha,\beta)$ denote the minimal Euclidean distance between any two points  $a\in\alpha$ and $b\in\beta$. By compactness, there exist $\delta_1,\delta_2>0$ such that $d(\Gamma,\gamma_n)>\delta_1$, and $d(\mu,\gamma_d)>\delta_2$.
  
For $\varepsilon>0$, let $\widehat\Gamma$ be the circle centered at origin with radius $2+\varepsilon.$ Let $\widehat\gamma_d,\widehat\gamma_n$ be the preimages of $\widehat\Gamma$ under $F_\lambda$ such that $\widehat\gamma_n\in B_{\lambda}$ and $\widehat\gamma_d\in T_{\lambda},$ which exist since $\widehat\Gamma\in B_\lambda$. Since all the closed curves and circles above are away from the pole at the origin, we can and do choose $\varepsilon$ small enough so that $d(\widehat\gamma_n,\gamma_n)<\frac12\min{(\delta_1,\delta_2)}$ and $d(\widehat\gamma_d,\gamma_d)<\frac12\min{(\delta_1,\delta_2)}$.

We then define $V_\lambda'$ as the region bounded by two rays of arguments $\frac{\psi\pm\pi}{m}$ and the two curves $\widehat\gamma_n$, $\widehat\gamma_d$. Then let $V_{\lambda}=F_\lambda(V_{\lambda}')$. Note that $\widehat\gamma_n$ and $\widehat\gamma_d$ map to different segments of $\widehat\Gamma$ since $n\neq d$ and the two rays map to two non-parallel line segments crossing the origin. So $V_{\lambda}$ has the same shape as the old $U_\lambda$, except for possibly the details on the outer and inner boundary.  Notice also that the arguments of these two ray-segments bounding $V_\lambda$ are $\frac{n\psi-d\pi}{m}$ and $\frac{n\psi+d\pi}{m}$. 

Define $\widehat U_{\lambda}\subset V_{\lambda}$ as the region bounded by $\widehat\Gamma$ and the two line segments above. So \[ \widehat U_{\lambda} = \lcu z\in\C \,:\, |z| < 2+\varepsilon,\, \theta_2 < \operatorname{Arg}(z) < \theta_1 \rcu \] where \[\theta_1=\min \left\{\frac{n\psi+d\pi}{m},\frac{n\psi-d\pi}{m}+\pi \right\},\ \ \ 
    \theta_2=\max \left\{ \frac{n\psi-d\pi}{m},\frac{n\psi+d\pi}{m}-\pi \right\}.\]
Note that $\widehat U_\lambda$ has a Pac-Man shape similar to $U_\lambda$ as shown in Figure \ref{fig:pacman2.0}. We then define $\widehat U_{\lambda}'$ as the component of the preimage of $\widehat U_{\lambda}$ under $F_\lambda$ containing a segment of the ray $\Arg z = \psi/m$. See Figure \ref{fig:z-plane}.

  \begin{figure}[h]
    \centering
    \includegraphics[width=0.7\textwidth]{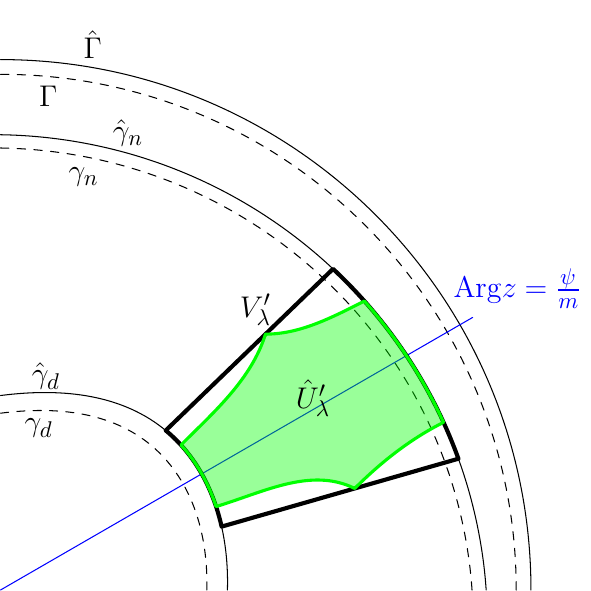}
    \caption{Constructing $\widehat U_\lambda '$. Each of the curves $\gamma_d$, $\gamma_n$, and $\Gamma$ used in the construction of the original $U_\lambda$ is dashed. The new curves $\widehat\gamma_d$, $\widehat \gamma_n$, and $\widehat\Gamma$ are solid. $V_\lambda'$ is constructed as detailed above, then $V_\lambda$ (not depicted) is defined as $F(V_\lambda ')$. We take a subset $\widehat U_\lambda$ (not depicted) of $V_\lambda$ as described above, then define $\widehat U_\lambda '$ to be a well-chosen component of the preimage of $U_\lambda$ under $F$.}\label{fig:z-plane}
\end{figure}

\begin{proposition} \label{prop:subset}
 $V_{\lambda}'$ is a subset of $\widehat U_{\lambda}$.
\end{proposition}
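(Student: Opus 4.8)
The plan is to reduce the set-theoretic containment $V_\lambda'\subseteq\widehat U_\lambda$ to two scalar conditions and verify each. Since the inner boundary $\widehat\gamma_d$ of $V_\lambda'$ lies outside $\mu$, the region $V_\lambda'$ avoids the origin, so every $z\in V_\lambda'$ has a well-defined modulus and argument. Membership in $\widehat U_\lambda=\{z:|z|<2+\varepsilon,\ \theta_2<\Arg z<\theta_1\}$ is then equivalent to the conjunction of $|z|<2+\varepsilon$ and $\theta_2<\Arg z<\theta_1$, and I would establish these two facts independently.

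For the modulus bound I would argue that $\max_{\overline{V_\lambda'}}|z|$ is attained on the outer boundary curve $\widehat\gamma_n$: within the angular sector, $\widehat\gamma_n$ is a radial graph $r=r(\theta)$, and moving outward along any ray from a point of $V_\lambda'$ keeps the angle fixed while increasing $|z|$ until the ray meets $\widehat\gamma_n$, so no interior point and no point of the remaining boundary pieces ($\widehat\gamma_d$ and the two ray-segments, which connect $\widehat\gamma_d$ to $\widehat\gamma_n$) exceeds the modulus attained on $\widehat\gamma_n$. From $d(\Gamma,\gamma_n)>\delta_1$ and the fact that $\gamma_n$ lies inside $\Gamma$ one gets $\max_{\gamma_n}|z|<2-\delta_1$; since $\varepsilon$ was chosen so that $\widehat\gamma_n$ lies within $\tfrac12\min(\delta_1,\delta_2)$ of $\gamma_n$, it follows that $\max_{\widehat\gamma_n}|z|<2<2+\varepsilon$, and hence $|z|<2+\varepsilon$ throughout $V_\lambda'$.

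For the argument bound, which is the heart of the proof, recall that $V_\lambda'$ is the sector cut out by the rays $\Arg z=\frac{\psi\pm\pi}{m}$, so $\Arg z\in\bigl(\frac{\psi-\pi}{m},\frac{\psi+\pi}{m}\bigr)$ for every $z\in V_\lambda'$. I would show $\bigl(\frac{\psi-\pi}{m},\frac{\psi+\pi}{m}\bigr)\subseteq(\theta_2,\theta_1)$ by unpacking the $\min$ and $\max$ in $\theta_1,\theta_2$ into four inequalities: $\frac{\psi+\pi}{m}\le\frac{n\psi+d\pi}{m}$ and $\frac{\psi+\pi}{m}\le\frac{n\psi-d\pi}{m}+\pi$ (so that $\frac{\psi+\pi}{m}\le\theta_1$), together with $\frac{n\psi-d\pi}{m}\le\frac{\psi-\pi}{m}$ and $\frac{n\psi+d\pi}{m}-\pi\le\frac{\psi-\pi}{m}$ (so that $\theta_2\le\frac{\psi-\pi}{m}$). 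Using $m=n+d$, these reduce respectively to $\psi\ge-\frac{(d-1)\pi}{n-1}$, $\psi\ge-\pi$, $\psi\le\frac{(d-1)\pi}{n-1}$, and $\psi\le\pi$. All four follow from $|\psi|=|\Arg\lambda|\le\frac{\pi}{n-1}$ together with $d\ge2$ (which is forced by $\frac1n+\frac1d<1$), since then $\frac{(d-1)\pi}{n-1}\ge\frac{\pi}{n-1}\ge|\psi|$.

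I expect the angular step, not the modulus step, to be the main obstacle. The difficulty is that $V_\lambda'$ is angularly centered near $\frac{\psi}{m}$ whereas $\widehat U_\lambda$ is centered near $\frac{n\psi}{m}$, so the containment is not a matter of one sector obviously sitting inside a wider concentric one; it genuinely depends on the narrowness of $W$ (the bound $|\Arg\lambda|\le\frac{\pi}{n-1}$) and on $d\ge2$. Care is needed both to track which terms attain the $\min$ and $\max$ defining $\theta_1,\theta_2$ as $\psi$ ranges over its interval, and to confirm that the inequalities are strict away from the corner cases of $W$, so that the open sector of $V_\lambda'$ lands inside the open set $\widehat U_\lambda$.
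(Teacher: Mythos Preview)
Your proposal is correct and follows the same two-step structure as the paper: bound the modulus by the outer curve $\widehat\gamma_n$, then show the angular sector $\bigl(\frac{\psi-\pi}{m},\frac{\psi+\pi}{m}\bigr)$ lies inside $(\theta_2,\theta_1)$. The only notable difference is that the paper splits the argument step into the cases $n>d$ and $n<d$, identifying in each case which term realizes the $\min$/$\max$ in $\theta_1,\theta_2$, whereas you verify all four candidate inequalities simultaneously using $|\psi|\le\frac{\pi}{n-1}$ and $d\ge 2$; your case-free treatment is a bit cleaner and also makes transparent that only the non-strict inequalities $\theta_2\le\frac{\psi-\pi}{m}\le\frac{\psi+\pi}{m}\le\theta_1$ are needed (since $V_\lambda'$ is open).
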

    \begin{proof}
    	Since the outer boundary of $V_\lambda'$ lies on $\widehat\gamma_n$, the maximum modulus of any point in $V_\lambda'$ is less that $2+\varepsilon$, which is the supremum of the moduli of points in $\widehat U_\lambda$. So it remains to show that $\theta_2<\frac{\psi-\pi}{m}$ and $\frac{\psi+\pi}{m}<\theta_1$.

        We will consider two cases: $n>d$, and $n<d.$
    
    \noindent {\bf Case 1:} $n>d$. 
    
    	By definition of $\theta_1$ and $\theta_2$, we have
            \[\theta_1=\frac{n\psi+d\pi}{m},\quad \text{and} \quad \theta_2=\frac{n\psi-d\pi}{m}.\]
        Since $n,d>1$, we have        
            \[\frac{\psi+\pi}{m}<\frac{n\psi+d\pi}{m}=\theta_1.\] 
        Recall by definition of $W$ that $|\psi|\leq \pi/(n-1)$. It follows that $(n-1)\psi<(d-1)\pi$, and so $n\psi-d\pi<\psi-\pi.$ Hence
            \[\frac{\psi-\pi}{m}>\frac{n\psi-d\pi}{m}=\theta_2.\]
        Therefore both inequalities hold when $n>d.$

	\noindent {\bf Case 2:} $n<d$.
	
		Again by definition of $\theta_1$ and $\theta_2$, we now have
            \[\theta_1=\frac{n\psi-d\pi}{m}+\pi=\frac{n\psi+n\pi}{m}, \quad \text{and} \quad \theta_2=\frac{n\psi+d\pi}{m}-\pi=\frac{n\psi-n\pi}{m}.\] 
        Clearly,
            \[\frac{\psi+\pi}{m}<\frac{n\psi+n\pi}{m}=\theta_1.\]
        Moreover, since $\psi\leq\frac{\pi}{n-1}<\pi$, we have $\psi-\pi<0$. It then follows that 
            \[\frac{\psi-\pi}{m}>n\frac{\psi-\pi}{m}=\frac{n\psi-n\pi}{m}=\theta_2.\] 
        Therefore, both inequalities hold when $n<d.$
        Thus, $V_{\lambda}'\subset \widehat U_{\lambda}.$
    \end{proof}
  
    Since $\widehat U_{\lambda}\subset V_{\lambda}$, we have by choice of component that $\widehat U_{\lambda}'\subset V_{\lambda}'$. By Proposition \ref{prop:subset} we have $\widehat U_{\lambda}'\subset \widehat U_{\lambda}$. Furthermore, since $\widehat U_\lambda$ is the subset of $V_\lambda$ onto which $F_\lambda$ maps $V_\lambda '$ in a 2-to-1 fashion, we can conclude that the function $F_\lambda:\ U_{\lambda}'\to U_{\lambda}$ is polynomial-like of degree two. This proves the following replacement of Proposition \ref{prop:5.1}.
    
    \begin{proposition} \label{prop:2to1}
    Let $\widehat U_\lambda'$ be defined as above. Then the family $F_\lambda$ defined on $\widehat U_\lambda'$ with $\lambda \in W$ is a polynomial-like family of degree two.
    \end{proposition}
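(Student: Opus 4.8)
The plan is to verify that the function $F_\lambda : \widehat U_\lambda' \to \widehat U_\lambda$ satisfies each of the four bulleted criteria in Theorem \ref{thm:mandelbrot}, since the remaining hypotheses (the winding condition) are handled separately in the replacement of Lemma \ref{lem:5.2}. Most of the analytic-dependence and boundary-variation criteria follow as in \cite{JSM} since $F_\lambda$ is a rational map depending analytically on $\lambda$ and $z$, and the boundaries of $\widehat U_\lambda$ and $\widehat U_\lambda'$ are built from the circle $\widehat\Gamma$, its preimages $\widehat\gamma_n,\widehat\gamma_d$, and rays of argument depending continuously on $\psi = \Arg(\lambda)$. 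The genuinely new content is the claim that $F_\lambda$ is polynomial-like of \emph{degree two} on $\widehat U_\lambda'$, namely that it is a proper $2$-to-$1$ map onto $\widehat U_\lambda$ containing a unique critical point.

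The strategy for the degree-two claim is to reduce to the $2$-to-$1$ mapping already established on the larger pair $V_\lambda' \to V_\lambda$. First I would record what we already have in hand: by construction $V_\lambda = F_\lambda(V_\lambda')$, and $F_\lambda$ maps $V_\lambda'$ onto $V_\lambda$ in the manner depicted in Figure \ref{fig:pacman2.0}; the two ray-segments of $\partial V_\lambda'$ map to the two non-parallel line segments through the origin of arguments $\frac{n\psi \pm d\pi}{m}$, and $\widehat\gamma_n,\widehat\gamma_d$ map to the two arcs of $\widehat\Gamma$. The key observation is that $\widehat U_\lambda$ is precisely the subregion of $V_\lambda$ lying between those two line segments and inside $\widehat\Gamma$ — the Pac-Man sector cut out by $\theta_2 < \Arg(z) < \theta_1$. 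Over this subregion the fold of the $2$-to-$1$ map lies entirely in the interior, so the restriction $F_\lambda : \widehat U_\lambda' \to \widehat U_\lambda$, where $\widehat U_\lambda'$ is the chosen preimage component, inherits the $2$-to-$1$ behavior as a proper map.

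To make the reduction rigorous I would argue as follows. By Proposition \ref{prop:subset} we have $V_\lambda' \subset \widehat U_\lambda$, and since $\widehat U_\lambda \subset V_\lambda$ the chosen preimage component satisfies $\widehat U_\lambda' \subset V_\lambda'$; then Proposition \ref{prop:subset} again gives $\widehat U_\lambda' \subset \widehat U_\lambda$, so the relative-compactness condition $\widehat U_\lambda' \Subset \widehat U_\lambda$ holds. Properness follows because $\widehat U_\lambda'$ is a full preimage component of the open set $\widehat U_\lambda$ under the proper map $F_\lambda$ restricted to $V_\lambda'$, so points approaching $\partial \widehat U_\lambda'$ map to points approaching $\partial \widehat U_\lambda$. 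For the degree, I would count preimages: a generic point $w \in \widehat U_\lambda$ has exactly two preimages in $V_\lambda'$ (by the established $2$-to-$1$ property there), and since $\widehat U_\lambda'$ is defined as the component containing the ray $\Arg z = \psi/m$, I must check that both preimages land in $\widehat U_\lambda'$ rather than being separated into different components. Finally I would verify that $\widehat U_\lambda'$ contains exactly one free critical point $c_\lambda$, which by the symmetry $F_\lambda(\nu z) = \nu^n F_\lambda(z)$ is forced by the angular width $\frac{2\pi}{m}$ of the sector of arguments $\frac{\psi \pm \pi}{m}$.

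The main obstacle I anticipate is the preimage-count/connectedness step: confirming that $\widehat U_\lambda'$ is a \emph{single} connected component whose two sheets cover $\widehat U_\lambda$ exactly twice, rather than $F_\lambda$ mapping $\widehat U_\lambda'$ onto $\widehat U_\lambda$ with some other degree or with the two preimages split across distinct components. This requires tracking how the $2$-to-$1$ folding of $V_\lambda'$ over $V_\lambda$ restricts to the angular-narrowed and radially-extended Pac-Man region $\widehat U_\lambda$ — essentially verifying that the critical value $v_\lambda$ and the fold locus stay in the interior of $\widehat U_\lambda$ for all $\lambda \in W$, so that no branch point sits on $\partial \widehat U_\lambda$ to break properness or change the local degree. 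The radial extension by $\varepsilon$ (so that $v_\lambda$, of modulus at most $2$ on $\partial W$, lies strictly inside $\widehat\Gamma$) is exactly what guarantees this, tying the degree-two verification back to the resolution of the Lemma \ref{lem:5.2} error.
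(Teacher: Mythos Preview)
Your proposal is correct and follows essentially the same route as the paper: use Proposition~\ref{prop:subset} together with $\widehat U_\lambda \subset V_\lambda$ to get the chain $\widehat U_\lambda' \subset V_\lambda' \subset \widehat U_\lambda$, then invoke the fact that $\widehat U_\lambda$ is by construction exactly the subregion of $V_\lambda$ onto which $F_\lambda$ maps $V_\lambda'$ two-to-one. The paper's argument is terser and does not spell out the properness, unique-critical-point, or single-component checks you flag as obstacles, so your version is if anything more careful than the original.
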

  
    It remains to show that the winding condition of Theorem \ref{thm:mandelbrot} is satisfied. 
    
    \begin{proposition} \label{prop:winding}
    For all $\lambda \in \partial W$, the critical value $v_\lambda$ lies in $\widehat U_\lambda - \widehat U_\lambda'$. Moreover, as $\lambda$ winds once around $\partial W$, the critical value $v_\lambda$ makes a closed loop around the outside of $\widehat U_\lambda'$.
    \end{proposition}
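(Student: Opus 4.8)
The plan is to traverse $\partial W$ along its four boundary pieces and track $v_\lambda$ on each. Throughout, $\Arg v_\lambda = n\psi/m$, while $|v_\lambda| = \frac{m}{d}\lpa\frac{d}{n}|\lambda|\rpa^{n/m}$ is increasing in $|\lambda|$; on the outer arc $|\lambda| = \frac{n}{d}\lpa\frac{2d}{m}\rpa^{m/n}$ this equals $2$, and on the inner arc $|\lambda| = \frac{n}{d}\lpa\frac{d}{2m}\rpa^{md/(nd-m)}$ it equals the radius of $\mu$ (this last identity is precisely the computation for which the inner radius of $W$ was chosen). These two facts already give $v_\lambda \in \widehat U_\lambda$ on all of $\partial W$: the maximum of $|v_\lambda|$ is the value $2 < 2+\varepsilon$ attained on the outer arc, and $\theta_2 < n\psi/m < \theta_1$ is immediate from the explicit forms of $\theta_1,\theta_2$ recorded in the proof of Proposition \ref{prop:subset} (in each case the endpoints are $n\psi/m$ shifted by a positive multiple of $\pi$).

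To show $v_\lambda \notin \widehat U_\lambda'$ I would use $\widehat U_\lambda' \subset V_\lambda'$ and argue on each piece. On the outer arc $v_\lambda$ lies on $\Gamma$; since $d(\Gamma,\gamma_n) > \delta_1$ and $\gamma_n$ lies inside $\Gamma$, every point of $\gamma_n$ has modulus at most $2-\delta_1$, and because $\widehat\gamma_n \to \gamma_n$ as $\varepsilon \to 0$ the chosen small $\varepsilon$ keeps $\widehat\gamma_n$ strictly inside $\Gamma$, so $v_\lambda$ lies outside the outer boundary $\widehat\gamma_n$ of $V_\lambda'$. Symmetrically, on the inner arc $v_\lambda$ lies on $\mu$; here $d(\mu,\gamma_d) > \delta_2$ with $\gamma_d$ outside $\mu$ bounds the modulus on $\gamma_d$ below by the radius of $\mu$ plus $\delta_2$, and the same perturbation argument places $\widehat\gamma_d$ strictly outside $\mu$, so $v_\lambda$ lies inside $\widehat\gamma_d$. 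In both cases $v_\lambda \notin V_\lambda' \supset \widehat U_\lambda'$. On each ray segment $\psi = \pm\pi/(n-1)$ is constant, so $\Arg v_\lambda = n\psi/m$ equals exactly $\frac{\psi\pm\pi}{m}$, the argument of the bounding ray of the open region $V_\lambda'$ on that side, whence $v_\lambda \notin V_\lambda'$ there as well. This establishes the first assertion.

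For the winding statement the subtlety is that $\widehat U_\lambda'$ itself moves with $\lambda$. I would make the winding number precise as the degree of the map $\lambda \mapsto (v_\lambda - c_\lambda)/|v_\lambda - c_\lambda|$ on $\partial W$; this is well-defined because $c_\lambda \in \widehat U_\lambda'$ while $v_\lambda \notin \widehat U_\lambda'$ (first assertion), and it is independent of the choice of interior basepoint since $\widehat U_\lambda'$ is connected (any two continuous sections are homotopic inside $\widehat U_\lambda'$, and $v_\lambda$ avoids that set). It then suffices to compute the total change of $\arg(v_\lambda - c_\lambda)$ around $\partial W$. Factoring $v_\lambda - c_\lambda = c_\lambda(w-1)$ with $w = v_\lambda/c_\lambda = \frac{m}{d}\lpa\frac{d}{n}|\lambda|\rpa^{(n-1)/m} e^{i(n-1)\psi/m}$, and noting that $\arg c_\lambda = \psi/m$ returns to its initial value around $\partial W$, the winding of $v_\lambda$ about $c_\lambda$ equals the winding of $w$ about the point $1$.

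Finally I would verify that $w$ traces the boundary of a polar rectangle containing $1$: its argument $(n-1)\psi/m$ runs over $[-\pi/m,\pi/m]$ while $|w|$ ranges between $|w|_{\mathrm{in}} = \tfrac12\lpa\frac{d}{2m}\rpa^{n/(nd-m)}$ on the inner arc and $|w|_{\mathrm{out}} = 2\lpa\frac{m}{2d}\rpa^{1/n}$ on the outer arc. The point $1$ is interior precisely because $|w|_{\mathrm{in}} < 1 < |w|_{\mathrm{out}}$: the right inequality follows from $m = n+d > d$, and the left from $\frac{d}{2m} < 1$ together with $nd-m > 0$ (the standing hypothesis $\frac1n+\frac1d<1$ makes the exponent positive). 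Tracking orientation as $\lambda$ runs once around $\partial W$ then yields winding number $\pm 1$. I expect the main obstacle to be this last step together with its conceptual underpinning: correctly formulating the winding number around the \emph{moving} region $\widehat U_\lambda'$ through the degree/section argument, and verifying the two modulus inequalities $|w|_{\mathrm{in}} < 1 < |w|_{\mathrm{out}}$, which are exactly what force the loop to genuinely encircle $\widehat U_\lambda'$ rather than merely sweep past it.
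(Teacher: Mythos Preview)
Your argument for the first assertion --- that $v_\lambda \in \widehat U_\lambda \setminus \widehat U_\lambda'$ on each of the four pieces of $\partial W$ --- matches the paper's proof essentially step for step: on the outer arc $v_\lambda$ lies on $\Gamma$ and hence outside $\widehat\gamma_n$, on the inner arc $v_\lambda$ lies on $\mu$ and hence inside $\widehat\gamma_d$, and on the ray segments $\Arg v_\lambda$ coincides with the bounding ray of $V_\lambda'$. You additionally verify $v_\lambda \in \widehat U_\lambda$ directly via the modulus bound $|v_\lambda| \leq 2 < 2+\varepsilon$ and the argument inequalities $\theta_2 < n\psi/m < \theta_1$, which the paper leaves implicit.

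For the winding assertion your treatment is genuinely more thorough than the paper's. The paper simply notes that $W$ is unchanged from \cite{JSM}, so the path of $v_\lambda$ is the same as in the original Lemma~5.2 argument, and then asserts that the winding condition follows. You instead make the winding number precise as the degree of $\lambda \mapsto (v_\lambda - c_\lambda)/|v_\lambda - c_\lambda|$, explicitly handle the issue that $\widehat U_\lambda'$ moves with $\lambda$ via the homotopy-of-basepoints remark, and reduce to the winding of $w = v_\lambda/c_\lambda$ about $1$ by factoring out $c_\lambda$. Your verification that $|w|_{\mathrm{in}} < 1 < |w|_{\mathrm{out}}$ is correct (one small comment: the inequality $|w|_{\mathrm{out}} = 2(m/2d)^{1/n} > 1$ needs slightly more than $m>d$; one uses $m/(2d) > 1/2$ together with $n \geq 2$ to get $(m/2d)^{1/n} > (1/2)^{1/n} \geq 1/2$). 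What your approach buys is a self-contained and rigorous degree computation that does not lean on the original \cite{JSM} argument; what the paper's approach buys is brevity, at the cost of deferring the actual winding verification to the very source it is correcting.
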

    \begin{proof}
    	Note that the set $W$ remains the same as in \cite{JSM}, so as $\lambda$ winds around $\partial W$, the path of the critical value $v_\lambda$ remains the same as in the proof of Lemma \ref{lem:5.2}. Hence $v_\lambda$ follows the two curves $\mu$ and $\Gamma$ and the two rays with arguments $\frac{\psi\pm\pi}{m}$.
    	
    	If $v_\lambda$ lies on $\Gamma$, then by construction of $\widehat\gamma_n$, we conclude that $v_\lambda \in \widehat U_\lambda - \widehat U_\lambda'$. Similarly, if $\lambda$ lies on $\mu$, then by construction of $\widehat\gamma_d$, we conclude that $v_\lambda \in \widehat U_\lambda - \widehat U_\lambda'$.
    	
    	If $\Arg \lambda = \pi/(n-1)$, then for any $z \in \widehat U_\lambda '$ we have
    		\[\Arg z \leq \frac{\psi+\pi}{m} = \frac{n\pi}{m(n-1)} = \lpa \frac{\pi}{n-1}\rpa\frac{n}{m} = \Arg v_\lambda.\]
    	Then $v_\lambda \in \widehat U_\lambda - \widehat U_\lambda'$. Similarly, if $\Arg \lambda = -\pi/(n-1)$ then for any $z \in \widehat U_\lambda '$ we have
	   		\[\Arg z \geq \frac{\psi-\pi}{m} = \frac{-n\pi}{m(n-1)} = \Arg v_\lambda.\]
	   		
	   	Hence the winding condition of Theorem \ref{thm:mandelbrot} is satisfied.
    \end{proof}
    
    This proposition proves that the original statement of Lemma 5.2 in \cite{JSM} is true.

\end{document}